\documentclass[a4paper,12pt]{article}

\usepackage{amsmath,amssymb,amsthm}
\usepackage{latexsym}
\usepackage{graphics}
\usepackage{graphicx,psfrag, cite}
\usepackage[bf, small]{titlesec}
\usepackage{authblk} 
\usepackage{hyperref}


\setlength{\topmargin}{-0.5cm}
\setlength{\oddsidemargin}{0.2cm}
\setlength{\evensidemargin}{0.2cm}
\setlength{\textheight}{21cm}
\setlength{\textwidth}{16cm}
\setlength{\footskip}{2cm}
\setlength{\columnsep}{1cm}

\usepackage{standalone}
\usepackage{pgfpages,tikz,pgfkeys,pgfplots}
\usetikzlibrary{arrows,positioning,matrix,fit,backgrounds,shapes,shapes.geometric, intersections}

\usetikzlibrary{shapes.callouts,decorations.pathmorphing} 
\usetikzlibrary{shadows} 
\usepackage{calc} 
\usetikzlibrary{decorations.markings} 
\tikzstyle{vertex}=[circle, draw, inner sep=0pt, minimum size=6pt] 
\newcommand{\vertex}{\node[vertex]}
\usetikzlibrary{arrows,matrix} 

\theoremstyle{plain}
\newtheorem{Thm}{Theorem}[section]

\newtheorem{Prop}[Thm]{Proposition}
\newtheorem{Cor}[Thm]{Corollary}

\theoremstyle{definition}
\newtheorem{Defi}[Thm]{Definition}
\newtheorem{Rem}[Thm]{Remark}

\newcommand{\CCC}{\mathcal{C}} 
\newcommand{\DDD}{\mathcal{D}} 



\title{Competition numbers of planar graphs
}

\author[1]{Jihoon Choi 
}
\author[2]{Soogang Eoh
\thanks{Corresponding author: mathfish@snu.ac.kr}}
\author[2]{Suh-Ryung Kim}

\affil[1]{Deparment of Mathematics Education, Cheongju University, Cheongju 28503, Republic of Korea}
\affil[2]{Department of Mathematics Education,
Seoul National University, Seoul 08826, Republic of Korea}

\date{}

\begin{document}

\maketitle

\begin{abstract}
In this paper, we relate the competition number of a graph to its edge clique cover number by presenting a tight inequality  $k(G) \ge \theta_e(G)-|V(G)|+\widetilde{k}(G)$ where $\theta_e(G)$, $k(G)$, and $\widetilde{k}(G)$ are the edge clique cover number, the competition number, and the co-competition number of a graph $G$, respectively.
By utilizing this inequality and a notion of  competition-effective edge clique cover, we obtain some meaningful results on competition numbers of planar graphs.
%
%
\end{abstract}


\noindent
{\bf Keywords:}
competition graph, competition number, edge clique cover number, planar graph, competition-effective edge clique cover, co-competition number

\noindent
{\bf 2010 Mathematics Subject Classification:} 05C20, 05C75

\section{Introduction}

The \emph{competition graph} of a digraph $D$, denoted by $C(D)$, is defined as a graph which has the same vertex set as $D$ and has an edge $xy$ between two distinct vertices $x$ and $y$ if and only if, for some vertex $z \in V(D)$, the arcs $(x,z)$ and $(y,z)$ are in $D$
(see \cite{raychaudhuri1985generalized, roberts1999competition, lundgren1997p, roberts1983characterization, lu2009two} for reference).
The notion of competition graphs is due to Cohen~\cite{cohen1968interval} and arose from ecology.
Competition graphs also have applications in areas such as coding, radio
transmission, and modeling of complex economic systems.

Roberts~\cite{roberts1978food} observed that any graph $G$ together with $|E(G)|$ additional isolated vertices is the competition graph of an acyclic digraph.
Then he defined the \emph{competition number} of a graph $G$ to be the smallest number $k$ such that $G$ together with $k$ isolated vertices is the competition graph of an acyclic digraph, and denoted it by $k(G)$.

Computing the competition number of a graph is one of the important problems in the field of competition graphs.
Yet, computing the competition number of a graph is usually not easy as Opsut has shown that computation of the competition number in general is NP-hard in 1982.
While an upper bound $M$ of the competition number of a graph $G$ may be obtained by constructing an acyclic digraph whose competition graph is $G$ together with $M$ isolated vertices, getting a good lower bound is a very difficult task because there are usually so many cases to consider.
There has been much effort to compute competition numbers of graphs (for some results on competition numbers, see \cite{cho2005competition, li2009competition, kim2005graphs, kim2010competition, mckay2014competition, opsut1982computation, park2009competition, sano2013generalization, seager1990double, zhao2017competition, kim2017competition, factor20131, li2012competition, lee2010competition}).
In this paper, we seek for ways to compute competition numbers of planar graphs.


In Section~2, we introduce the notion of competition-effective edge clique cover of a graph and give sufficient conditions for graphs having a competition-effective edge clique cover.
In Section~3, we introduce the notion of co-competition number of a graph $G$ denoted by $\widetilde{k}(G)$ and show that $k(G)$ and $\widetilde{k}(G)$ of a graph $G$ are related in terms of the edge clique cover number of $G$.
For a clique $K$ and an edge $e$ of a graph $G$, we say that $K$ \emph{covers} $e$ (or $e$ \emph{is covered by} $K$) if and only if $K$ contains the two end points of $e$.
An \emph{edge clique cover} of a graph $G$ is a collection of cliques that cover all the edges of $G$. The \emph{edge clique cover number} of a graph $G$, denoted by $\theta_e(G)$, is the smallest number of cliques in an edge clique cover of $G$.
Opsut~\cite{opsut1982computation} showed that $k(G) \ge \theta_e(G) - |V(G)| + 2$ for any graph $G$.
For a graph $G$, we could show that $k(G)$ and $\widetilde{k}(G)$ are related as  $k(G) \ge \theta_e(G)-|V(G)|+\widetilde{k}(G)$.
As a matter of fact, for a nonempty graph (a graph with at least one edge) $G$, $\widetilde{k}(G) \ge 2$ and our inequality generalizes the inequality $k(G) \ge \theta_e(G)-|V(G)|+2$ given by Opsut~\cite{opsut1982computation}.
Sano~\cite{sano2013generalization} gave a lower bound for the competition number which also generalizes Opsut's inequality but its viewpoint is different from ours.

For the graphs having a competition-effective edge clique cover such as nonempty diamond-free graphs, our inequality becomes an equality and the competition number may be computed in terms of the co-competition number.
Based on this observation, in Section~4, we give a sharp upper bound and a sharp lower bound for the competition number of a nontrivial connected diamond-free planar graph, each of which can be computed in a polynomial time.


Every graph in this paper is assumed to be finite and simple unless otherwise stated.
For all undefined graph theoretical terms, see~\cite{bondy2008graph}.


\section{Competition-effective edge clique covers of graphs}

Given a graph $G$, let $\widetilde{\DDD}(G)$ be the set of acyclic digraphs the competition graph of each of which is $G$ together with $k(G)$ isolated vertices, that is,
\[
\widetilde{\DDD}(G) = \{ D \mid \text{$D$ is acyclic and $C(D)$ is $G$ together with $k(G)$ isolated vertices} \}.
\]

\noindent
Now we introduce the notion of competition-effective edge clique covers of graphs
\begin{Defi}\label{def:cover}
Let $G$ be a nonempty graph. 
A minimum edge clique cover $\CCC := \{C_1, \ldots, C_{\theta_e(G)}\}$ of $G$ is called an \emph{competition-effective edge clique cover} of $G$ if every clique in $\CCC$ is maximal in $G$ and there exists an acyclic digraph $D \in \widetilde{\DDD}(G)$ satisfying the following property.
\begin{itemize}
\item[($\S$)] In $D$, there exist vertices $w_1, \ldots, w_{\theta_e(G)}$ such that $w_1, \ldots, w_{\theta_e(G)}$ are the only vertices of indegree nonzero in $D$ and $w_i$ is a common out-neighbor of all the vertices in $C_i$ for each $i=1,\ldots,\theta_e(G)$.
\end{itemize}
For a competition-effective edge clique cover $\CCC$ and a digraph $D$ in Definition~\ref{def:cover}, we say that $D$ is a \emph{digraph accompanying $\CCC$}.
By definition, $w_i$ in the property ($\S$) is not an out-neighbor of any vertex not in $C_i$ for each $i=1,\ldots,\theta_e(G)$.
In this vein, we call $w_i$ in the property ($\S$) of Definition~\ref{def:cover} the \emph{sink of $C_i$} in $D$ for each $i=1,\ldots,\theta_e(G)$.

The graph $G$ given in Figure~\ref{fig:effective} has a competition-effective edge clique cover.
The competition number of $G$ is one and the competition graph of $D$ in Figure~\ref{fig:effective} is $G \cup \{v_0\}$.
Now consider the family $\CCC = \{ C_1=\{v_8,v_9\}, C_2=\{v_7,v_8\}, C_3=\{v_6,v_7\}, C_4=\{v_4,v_5,v_6,v_9\}, C_5=\{v_3,v_4,v_5\}, C_6=\{v_1,v_2,v_4,v_9\} \}$ of maximal cliques of $G$.
It can easily be checked that $\CCC$ is a minimum edge clique cover of $G$.
Moreover, the $i$th term of $(v_7,v_6,v_5,v_3,v_2,v_0)$ is a common out-neighbor of all the vertices in $C_i$ for $i=1,\ldots,6$ and those terms are the only vertices of indegree nonzero in $D$.
\end{Defi}

\begin{figure}
\begin{center}
\begin{tikzpicture}[x=1.5cm, y=1.5cm]

    \vertex (b1) at (1,3) [label=left:$v_7$]{};
    \vertex (b2) at (1,2) [label=left:$v_8$]{};
    \vertex (b3) at (2,3) [label=above:$v_6$]{};
    \vertex (b4) at (2,2) [label=below left:$v_9$]{};
    \vertex (b5) at (2,1) [label=below:$v_2$]{};
    \vertex (b6) at (3,3) [label=above right:$v_5$]{};
    \vertex (b7) at (3,2) [label=below right:$v_4$]{};
    \vertex (b8) at (3,1) [label=below:$v_1$]{};
    \vertex (b9) at (4,2.5) [label=right:$v_3$]{};

    \path
 (b1) edge [-,green,thick] (b2)
 (b2) edge [-,red,thick] (b4)
 (b3) edge [-,orange,thick] (b4)
 (b1) edge [-,blue,thick] (b3)

 (b4) edge [-,orange,thick] (b6)
 (b6) edge [-,orange,thick] (b7)
 (b7) edge [-,orange,thick] (b3)
 (b4) edge [-,orange,thick] (b7)
 (b3) edge [-,orange,thick] (b6)

 (b4) edge [-,purple,thick] (b5)
 (b5) edge [-,purple,thick] (b7)
 (b7) edge [-,purple,thick] (b8)
 (b8) edge [-,purple,thick] (b4)
 (b5) edge [-,purple,thick] (b8)
 (b4) edge [-,purple,thick] (b7)

 (b6) edge [-,thick] (b7)
 (b6) edge [-,thick] (b9)
 (b7) edge [-,thick] (b9)

;
 \draw (2.5,0.3) node{$G$};
\end{tikzpicture}
\qquad \qquad
\begin{tikzpicture}[x=1.5cm, y=1.5cm]

 \vertex (b1) at (1,3) [label=left:$v_7$]{};
 \vertex (b2) at (1,2) [label=left:$v_8$]{};
 \vertex (b3) at (2,3) [label=below right:$v_6$]{};   \vertex (b4) at (2,2) [label=below left:$v_9$]{};
 \vertex (b5) at (2,1) [label=below:$v_2$]{};
 \vertex (b6) at (3,3) [label=above right:$v_5$]{};
 \vertex (b7) at (3,2) [label=left:$v_4$]{};
 \vertex (b8) at (3,1) [label=below:$v_1$]{};
 \vertex (b9) at (4,2.5) [label=right:$v_3$]{};
 \vertex (b10) at (4,1.5) [label=right:$v_0$]{};
    \path
 (b1) edge [->,green,thick] (b3)
 (b2) edge [->,green,thick] (b3)

 (b2) edge [->,red,thick] (b1)
 (b4) edge [->,red,thick] (b1)

 (b1) edge [->,blue,bend left=25,thick] (b6)
 (b3) edge [->,blue,thick] (b6)

 (b3) edge [->,orange,thick] (b9)
 (b4) edge [->,orange,thick] (b9)
 (b6) edge [->,orange,thick] (b9)
 (b7) edge [->,orange,thick] (b9)

 (b6) edge [->,thick] (b5)
 (b7) edge [->,thick] (b5)
 (b9) edge [->,thick] (b5)

 (b4) edge [->,purple,thick] (b10)
 (b5) edge [->,purple,thick] (b10)
 (b7) edge [->,purple,thick] (b10)
 (b8) edge [->,purple,thick] (b10)

 ;

 \draw (2.5,0.3) node{$D$}
	;
\end{tikzpicture}

\end{center}
\caption{A graph $G$ having a competition-effective edge clique cover and an acyclic digraph $D$ accompanying the competition-effective edge clique cover}
\label{fig:effective}
\end{figure}

%

Since a competition-effective edge clique cover $\CCC$ of a graph is a minimum edge clique cover, the sinks of cliques belonging to $\CCC$ in a digraph accompanying $\CCC$ are all distinct.



A \emph{perfect elimination ordering} of a graph $G$ with $N$ vertices is an ordering $[v_1, v_2, \ldots, v_n]$ of the vertices of $G$ such that the neighborhood of $v_i$ is a clique in $G_i := G[v_{i}, v_{i+1} \ldots, v_n]$ for each $i=1,\ldots,n-1$.
It is well-known that every chordal graph has a perfect elimination ordering.

It is also well known that a digraph $D$ is acyclic if and only if there exists a bijection $\ell: V(D) \to \{1,2,\ldots,|V(D)|\}$ such that whenever there is an arc from a vertex $u$ to a vertex $v$, $\ell(u) > \ell(v)$. We call such a function $\ell$ an \emph{acyclic labeling} of $D$.

Given a graph $G$ and a minimum edge clique cover $\CCC$, we may expand each clique in $\CCC$ to a maximal clique.
Therefore any graph has a minimum edge clique cover consisting of maximal cliques.

Given a graph $G$ and a vertex $v$ of $G$, we denote by $N_G[v]$ (resp.\ $N_G(v)$) the closed neighborhood (resp.\ open neighborhood) of $v$ in $G$.

Now we present some sufficient conditions under which a graph has a competition-effective edge clique cover.
\begin{Thm}\label{thm:chordal}
Every nonempty chordal graph has a competition-effective edge clique cover.
\end{Thm}
\begin{proof}
Take a nonempty chordal graph $G$.
If a graph $H$ has a competition-effective edge clique cover without having isolated vertices, then $H$ together with isolated vertices still has a competition-effective edge clique cover.
In this context, we may assume that $G$ has no isolated vertices.
Since $G$ is chordal, there exists a perfect elimination ordering $[v_1, v_2, \ldots, v_n]$ of $G$.
Let $G_i = G[v_i, v_{i+1}, \ldots, v_n]$, and $N_i = N_{G_i}[v_i]$ for each $i = 1, 2, \ldots, n-1$.
In addition, let $\theta = \theta_e(G)$ and $\CCC = \{C_1, \ldots, C_\theta\}$ be a minimum edge clique cover of $G$ consisting of maximal cliques.

Given a subset $X = \{v_{r_1}, v_{r_2}, \ldots, v_{r_j}\}$ of $V(G)$ with ${r_1} < {r_2} < \cdots < r_j$ for some positive integer $j$, we may correspond the ordered $j$-tuple $\mathbf{a}(X):= (r_1, r_2, \ldots, r_j)$.
We rearrange $C_1, \ldots, C_\theta$ so that $\mathbf{a}(C_1) \prec \mathbf{a}(C_2) \cdots \prec \mathbf{a}(C_\theta)$ where $\prec$ is the lexicographic order.

To show that $\CCC$ consists of some elements in $\{N_1, N_2, \ldots, N_{n-1}\}$, fix $l \in \{1,\ldots,\theta\}$ and let $n_l = \min \{ j \mid v_j \in C_l \}$.
By the choice of ${n_l}$, 
\begin{equation}\label{eqn:chordal}
C_l \subset V(G_{n_l}).
\end{equation}
Thus $C_l$ is a clique in $G_{n_l}$ containing $v_{n_l}$.
By the definition of perfect elimination ordering, $v_{n_l}$ is a simplicial vertex of $G_{n_l}$ and so $C_{l} \subset N_{n_l}$.
However, since $C_l$ is a maximal clique in $G$, it is also a maximal clique in $G_{n_l}$.
Therefore $C_l = N_{n_l}$ as $N_{n_l}$ is a clique in $G_{n_l}$ and so $\CCC$ consists of some elements in $\{N_1, N_2, \ldots, N_{n-1}\}$.
Thus $\CCC = \{N_{n_1}, \ldots, N_{n_\theta}\}$.
Note that $n_1=1$.
To see why, recall our assumption that $G$ has no isolated vertices.
Since $C_1, \ldots, C_\theta$ are arranged based upon the lexicographic order of $\mathbf{a}(C_1), \ldots, \mathbf{a}(C_\theta)$, $v_1$ belongs to $C_1$.

Let $D$ be a digraph with $V(D) = V(G) \cup \{v_0\}$ and
\[A(D) = \bigcup_{l=1}^\theta \{(v,v_{n_{l}-1}) \mid v \in N_{n_l}\}.
\]
Now the following are true:
\begin{align*}
v_i v_j \in E(G)
&\Leftrightarrow v_iv_j \in N_{n_l} \text{ for some } l \in \{1,\ldots,\theta \} \\
&\Leftrightarrow (v_i, v_{n_{l}-1}) \in A(D) \text{ and } (v_j, v_{n_{l}-1}) \in A(D) \text{ for some } l \in \{1,\ldots,\theta \} \\
&\Leftrightarrow v_i v_j \in E(C(D)).
\end{align*}
Thus the competition graph of $D$ is $G$ together with the isolated vertex $v_0$.
To show that $D$ is acyclic, take $(v_i,v_j) \in A(D)$.
Then $v_i \in N_{n_l}$ and $j=n_l-1$ for some $l \in \{1,\ldots,\theta\}$.
Now, since $N_{n_l} = C_l$, by (\ref{eqn:chordal}),
$N_{n_l} \subset V(G_{n_l}) = \{v_{n_l}, v_{{n_l}+1}, \ldots, v_n\}$, which implies $i \ge n_l$.
Thus $i > j$ and so $D$ is acyclic.
By the definition of $D$, $v_{n_1-1}, \ldots, v_{n_\theta-1}$ are the only vertices of indegree nonzero in $D$ and $\CCC$ satisfies ($\S$) of Definition~\ref{def:cover}.
This completes the proof.
\end{proof}

\begin{Rem}
In the proof given above, 
we have actually shown a stronger statement that every minimum edge clique cover of a chordal graph consisting of maximal cliques is a competition-effective edge clique cover.
\end{Rem}

Given a maximal clique $C$ of a graph $G$, an edge of $G$ is said to be \emph{occupied by $C$} if $C$ is the only maximal clique that covers it.

\begin{Thm}\label{thm:diamond}
Suppose that a nonempty graph $G$ satisfies the property that, for each maximal clique $C$, each vertex of $C$ is an end vertex of an edge occupied by $C$.
Then $G$ has a competition-effective edge clique cover.
\end{Thm}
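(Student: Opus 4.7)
My first step is to identify the candidate minimum edge clique cover. Under the hypothesis, any edge clique cover $\{D_1,\dots,D_m\}$ of $G$ admits an injection from the family $\mathcal{C}$ of all maximal cliques of $G$ of size at least two: for each maximal clique $C$, pick an edge $e$ occupied by $C$ and send $C$ to any $D_j$ covering $e$. Since $C$ is the unique maximal clique containing both endpoints of $e$, such $D_j$ must lie in $C$; uniqueness of the occupied edge then forces distinct maximal cliques to be sent to distinct cover elements. This gives $\theta_e(G)\ge|\mathcal{C}|$, and since $\mathcal{C}$ is itself an edge clique cover whose members are maximal, $\theta_e(G)=|\mathcal{C}|$.

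Next I would construct an acyclic digraph $D$ realizing $\mathcal{C}$ as an effective competition cover. The plan is to assign to each $C_i\in\mathcal{C}$ a distinct sink $w_i$ with $w_i\notin C_i$, taking the sinks from $V(G)$ whenever possible and using fresh extra vertices otherwise; the digraph $D$ is then defined by $A(D)=\{(v,w_i):v\in C_i,\;i=1,\dots,\theta_e(G)\}$. To secure acyclicity, I build a total order $\prec$ on $V(D)$ with every $w_i$ strictly below every vertex of $C_i$. Condition (ii) of Definition~\ref{def:cover} then holds by construction; the in-neighborhood of each $w_i$ is exactly $C_i$, so $\mathcal{C}$ produces precisely the edges of $G$ in $C(D)$ and no spurious ones.

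The construction of the assignment proceeds greedily in the spirit of Theorem~\ref{thm:chordal}. I would process maximal cliques in a carefully chosen order: at each stage, pick an unprocessed $C_i$, then select a sink $w_i\in V(G)$ that is not already used, does not lie in $C_i$, and is placeable below every remaining vertex of $C_i$ in the partial order; if no such vertex exists, append a new extra at the bottom. The occupied-edge hypothesis is what makes this work: because every vertex $v\in C_i$ is an endpoint of an edge of $C_i$ occupied exclusively by $C_i$, the vertex $v$ is ``anchored'' to $C_i$, which restricts the clique-memberships of $v$ in a way that keeps track of which potential sink assignments would force a cycle and which remain safe.

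The main obstacle is matching the number $|E|$ of extras to $k(G)$ exactly, as required by condition (i). The construction immediately gives the upper bound $k(G)\le|E|$ and the number of in-degree-zero vertices of $D$ equals $|V(G)|+|E|-\theta_e(G)$. Combining with the sharpened lower bound $k(G)\ge\theta_e(G)-|V(G)|+p(G)$ proved in the paper, equality will follow once I show that the greedy procedure can always be executed so that the number of sinks drawn from $V(G)$ is as large as $|V(G)|-p(G)$. Establishing this extremal property of the assignment is the technical core of the argument, and it rests squarely on the flexibility provided by the occupied-edge hypothesis.
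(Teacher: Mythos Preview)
Your first step---showing that the family $\CCC$ of all maximal cliques is a minimum edge clique cover---is fine and matches the paper's argument in spirit.

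The second step, however, has a genuine gap. You propose to build the accompanying digraph $D$ from scratch by a greedy assignment of sinks, and you correctly identify that the difficulty is forcing the number of fresh extra vertices to equal $k(G)$ exactly. Your plan to close this gap is to invoke the inequality $k(G)\ge\theta_e(G)-|V(G)|+p(G)$ and then argue that the greedy procedure can place $|V(G)|-p(G)$ sinks inside $V(G)$. But this is circular: $p(G)$ is by definition the maximum number of in-degree-zero vertices over digraphs that \emph{already} use exactly $k(G)$ extras, so you cannot compute or bound $p(G)$ without first having such a digraph in hand. You have not explained how the greedy procedure would ``know'' the value of $p(G)$, nor have you given any argument for the claimed extremal property beyond saying it ``rests squarely on the flexibility provided by the occupied-edge hypothesis.'' That is precisely the content of the theorem, not a proof of it.

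The paper avoids this difficulty entirely by a different idea: rather than building $D$ from nothing, it \emph{starts} with an acyclic digraph $D$ that already realizes $G$ with $k(G)$ isolated vertices (such a $D$ exists by definition of $k(G)$), and then modifies it on the same vertex set. For each maximal clique $C$, one picks as sink $x_C$ the vertex of smallest acyclic label among all common out-neighbors (in $D$) of endpoints of edges occupied by $C$. The new digraph $D^*$ has arcs $(v,x_C)$ for all $v\in C$, $C\in\CCC$. The occupied-edge hypothesis is used exactly once, and cleanly: since every $v\in C$ is an endpoint of some edge occupied by $C$, the vertex $v$ already had (in $D$) an out-neighbor with label at least $\ell(x_C)$, so $\ell(v)>\ell(x_C)$ and $D^*$ is acyclic. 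Distinctness of the $x_C$ follows because $x_C$ is a prey of an edge occupied only by $C$. Since $V(D^*)=V(D)$, the number of added isolated vertices is automatically $k(G)$, and nothing about $p(G)$ is needed.
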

\begin{proof}
Let $\CCC$ be the set of all maximal cliques of $G$.
We first show that $\CCC$ is a minimum edge clique cover.
Obviously $\CCC$ is an edge clique cover of $G$.
Let $\CCC^*$ be a minimum edge clique cover of $G$ consisting of maximal cliques.
Then clearly $\CCC^* \subset \CCC$.
By the hypothesis, every clique in $\CCC$ has an edge occupied by it.
Since an edge occupied by a maximal clique cannot be covered by any other maximal cliques, $\CCC$ with an element omitted no longer covers the edges of $G$.
Therefore $\CCC^* = \CCC$ and so $\CCC$ is a minimum edge clique cover.

Let $D$ be a digraph in $\widetilde{\DDD}(G)$ and $\ell$ be an acyclic labeling of $D$.

Take a maximal clique $C$ in $\CCC$.
Then the end vertices of each of the edges occupied by $C$ has a common out-neighbor in $D$.
We consider such common out-neighbors and take one, say $x_C$, with the smallest $\ell$-value among them.
Since $x_C$ is a common out-neighbor of the end vertices of an edge occupied by $C$ in $D$, $x_C \neq x_{C'}$ if $C \neq C'$ for $C, C' \in \CCC$.
We consider the digraph $D^*$ with the vertex set $V(D)$ and the arc set
\[
\bigcup_{C \in \CCC} \{(v, x_C) \mid v \in C \}.
\]
Since $x_C \neq x_{C'}$ for distinct $C, C' \in \CCC$, the competition graph of $D^*$ is $G$ together with $k(G)$ isolated vertices.
Therefore it remains to show that $D^*$ is acyclic in order to prove $D^* \in \widetilde{D}(G)$.

Take an arc in $D^*$.
Then it is in the form of $(v,x_C)$ for some $C \in \CCC$ and $v \in C$.
By the hypothesis, $v$ is an end vertex of an edge occupied by $C$.
Then $v$ and the other end of that edge have a common out-neighbor $x$ in $D$.
Since $\ell$ is an acyclic labeling of $D$, $\ell(x) < \ell(v)$.
By the choice of $x_C$, $\ell(x_C) \le \ell(x)$, so $\ell(x_C) < \ell(v)$.
Thus $\ell$ is still an acyclic labeling of $D^*$ and so $D^*$ is acyclic.
Hence
$D^* \in \widetilde{\DDD}(G)$.
By the definition of $D^*$, it is obvious that $\CCC$ satisfies ($\S$) of Definition~\ref{def:cover} and the theorem statement holds.
\end{proof}

A \emph{diamond} is a graph obtained from $K_4$ by deleting an edge.
A graph is called \emph{diamond-free} if it does not contain a diamond as an induced subgraph.
It is easy to see that a graph is diamond-free if and only if no two of its maximal cliques cover a common edge.

\begin{Cor}\label{cor:diamond2}
Every nonempty diamond-free graph has a competition-effective edge clique cover.
\end{Cor}
\begin{proof}
If $G$ is a nonempty diamond-free graph, then each edge of $G$ is occupied by a maximal clique.
Hence the corollary immediately follows from Theorem~\ref{thm:diamond}.
\end{proof}
\section{A new parameter of a graph related to competition number}

In this section, we first introduce the notion of the co-competition number of a graph.
Then we give a new lower bound for the competition number of a graph in terms of its co-competition number.
In addition, we show that the graphs having competition-effective edge clique covers have the lower bound as their competition numbers.

\begin{Defi}
Let $G$ be a graph.
Among the numbers of vertices of indegree $0$ in the digraphs in $\widetilde{\DDD}(G)$, we call the maximum the \emph{co-competition number} of $G$ and denote it by $\widetilde{k}(G)$, that is,
\[
\widetilde{k}(G) = \max \{ i(D) \mid \text{$i(D)$ denotes the number of vertices of indegree $0$ in $D \in \widetilde{\DDD}(G)$} \}.
\]
We say that $\widetilde{k}(G)$ is {\it attained by} $D$ if $\widetilde{k}(G) = i(D)$.
\end{Defi}
\noindent For a graph $G$, the number of vertices with indegree $0$ in a digraph belonging to $\widetilde{\DDD}(G)$ is less than or equal to $|V(G)|$. Thus $\widetilde{k}(G)$ is finite.


\begin{Prop}\label{prop:base2}
For any graph $G$ with at least two vertices, $\widetilde{k}(G) \ge 2$ \end{Prop}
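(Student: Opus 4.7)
The plan is to start from an arbitrary acyclic digraph $D$ whose competition graph is $G$ together with $k(G)$ isolated vertices (which exists by definition of $k(G)$) and, if necessary, delete a single arc to produce another such digraph with at least two vertices of in-degree $0$.

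First, I would fix an acyclic labeling $\ell \colon V(D) \to \{1, 2, \ldots, |V(G)| + k(G)\}$. Let $u$ be the vertex carrying the largest label; since every in-neighbor of $u$ would require a strictly larger label, $u$ has in-degree $0$. Since $|V(G)| \ge 2$ we have $|V(D)| \ge 2$, so a second vertex $v \neq u$ carrying the second largest label exists. Any in-neighbor of $v$ must have $\ell$-value strictly larger than $\ell(v)$, so the only candidate is $u$; hence $v$ has in-degree $0$ or in-degree $1$ with $u$ as its sole in-neighbor.

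If $v$ already has in-degree $0$, then $D$ itself exhibits two vertices of in-degree $0$ and $p(G) \ge 2$. Otherwise I would delete the single arc $(u,v)$ from $D$ to obtain $D'$, which is still acyclic and in which $v$ joins $u$ as an in-degree-$0$ vertex. What remains is to verify that $D'$ has the same competition graph as $D$: removing an arc cannot create new common-prey relationships, so no edge is gained, and the only edges of $C(D)$ that could possibly be destroyed are those for which $v$ is a common prey of two distinct vertices. But in $D$ the only in-neighbor of $v$ is $u$, so $v$ is not a common prey of any pair of distinct vertices, and no edge of $C(D)$ is lost. Therefore $D'$ again has competition graph equal to $G$ together with $k(G)$ isolated vertices, and it has at least two vertices of in-degree $0$, giving $p(G) \ge 2$.

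I do not foresee a substantial obstacle; the argument is a one-step local modification, and the only point requiring care is the invariance of the competition graph when the arc $(u,v)$ is deleted, which follows immediately from the fact that $v$ has in-degree at most one in $D$.
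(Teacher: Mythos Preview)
Your argument is correct and essentially identical to the paper's: both use an acyclic labeling, observe that the top-labeled vertex has in-degree $0$ and that the second-highest vertex has at most one in-neighbor (namely the top vertex), and then note that deleting that single arc leaves the competition graph unchanged. The only cosmetic difference is that the paper selects $D$ to have the fewest arcs and phrases the last step as a contradiction, whereas you perform the deletion directly.
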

\begin{proof}
Let $G$ be a graph with at least two vertices.
In $\widetilde{D}(G)$, we take a digraph $D$ with arcs as few as possible.
Since $G$ has at least two vertices, $n := |V(D)| \ge 2$.
Let $\ell$ be an acyclic labeling of $D$.
We denote by $v_i$ the vertex $v$ satisfying $\ell(v)=i$.
By definition of acyclic labeling, $v_n$ is of indegree $0$.
Suppose that the indegree of $v_{n-1}$ is nonzero.
By definition of acyclic labeling, $v_n$ is the only in-neighbor of $v_{n-1}$, which implies that $v_{n-1}$ does not induce any edge in $G$ as a common out-neighbor of two vertices.
Therefore we may delete the arc $(v_n, v_{n-1})$ to obtain an acyclic digraph in $\widetilde{\DDD}(G)$, which contradicts the choice of $D$.
Thus $v_{n-1}$ is of indegree $0$.
Hence $\widetilde{k}(G) \ge 2$.
\end{proof}

\begin{Thm}\label{prop:basenumber2}
Let $G$ be a graph with a competition-effective edge clique cover $\CCC$. 
Then the cliques in $\CCC$ can be labeled as $C_1, \ldots, C_{\theta_e(G)}$ so that
$\widetilde{k}(G) \ge \left| \bigcup_{i=k}^{\theta_e(G)} C_i \right| - \theta_e(G) + k$ for any $1 \le k \le \theta_e(G)$.
\end{Thm}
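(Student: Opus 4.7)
The plan is to use the structure of a digraph accompanying $\CCC$ to dictate the labeling. Fix a digraph $D$ accompanying $\CCC$ with distinct sinks $w_1, \ldots, w_{\theta_e(G)}$ (distinctness is the remark immediately following Definition~\ref{def:cover}), and fix an acyclic labeling $\ell$ of $D$. I would label $\CCC$ by sorting the sinks in increasing $\ell$-order: relabel so that $\ell(w_1) < \ell(w_2) < \cdots < \ell(w_{\theta_e(G)})$, and rename the corresponding cliques to $C_1, \ldots, C_{\theta_e(G)}$.

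The core observation I would verify is that, under this ordering, $w_i \in C_j$ forces $i > j$. Indeed, $w_i \in C_j$ means $w_i$ is an in-neighbor of $w_j$ in $D$, so $(w_i, w_j) \in A(D)$, and acyclicity of $\ell$ gives $\ell(w_i) > \ell(w_j)$, i.e., $i > j$. A direct consequence: for every $i \le k$, the sink $w_i$ cannot lie in any $C_j$ with $j \ge k$, so the $k$ distinct sinks $w_1, \ldots, w_k$ all lie outside $U_k := \bigcup_{i=k}^{\theta_e(G)} C_i$. Since $|V(D)| = |V(G)| + k(G)$, this already yields the clean estimate $|U_k| \le |V(G)| + k(G) - k$.

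To convert the right-hand side into $p(G) + \theta_e(G) - k$, I would invoke the accompanying digraph once more: the only vertices of $D$ with positive in-degree are the $\theta_e(G)$ distinct sinks, so $D$ has exactly $|V(G)| + k(G) - \theta_e(G)$ vertices of in-degree zero. As the competition graph of $D$ is $G$ together with $k(G)$ isolated vertices, this shows $p(G) \ge |V(G)| + k(G) - \theta_e(G)$, equivalently $|V(G)| + k(G) - k \le p(G) + \theta_e(G) - k$. Chaining this with the bound from the previous paragraph gives the desired inequality. The main obstacle, in my view, is not any technical estimate but the choice of ordering itself: a size-based or greedy strategy for bounding $|U_k|$ does not obviously succeed for every $k$ simultaneously, and the non-obvious step is recognizing that ordering cliques by the acyclic labels of their sinks produces a single permutation that handles every $k$ at once via the pigeonhole-flavored fact $\{w_1, \ldots, w_k\} \cap U_k = \emptyset$.
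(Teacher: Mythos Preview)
Your argument is correct and follows essentially the same route as the paper: order the cliques by an acyclic labeling of their sinks, use that $w_i\in C_j$ forces $i>j$, and then count. The only cosmetic difference is that the paper directly observes that $U_k\setminus\{w_{k+1},\ldots,w_{\theta_e(G)}\}$ consists entirely of in-degree-$0$ vertices and bounds $p(G)$ from there, whereas you pass through the intermediate inequalities $|U_k|\le |V(D)|-k$ and $|V(D)|\le p(G)+\theta_e(G)$; both packagings are equivalent.
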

\begin{proof}
Let $D$ be a digraph accompanying $\CCC$.
We consider the subdigraph $D'$ of $D$ induced by the sinks of the cliques belonging to $\CCC$ in $D$.
Obviously $D'$ is acyclic, so we may label the vertices of $D'$ as $w_1, \ldots, w_{\theta_e(G)}$ so that
\[
\begin{minipage}{0.85\textwidth}
\begin{center}
$(w_j,w_i)$ is an arc only if $i < j$.
\end{center}
\end{minipage}
\tag{$\star$}
\]
Now we label the cliques in $\CCC$ as $C_1, \ldots, C_{\theta_e(G)}$ so that $w_i$ is the sink of $C_i$ for each $i=1,\ldots,\theta_e(G)$.

Fix $1 \le k \le \theta_e(G)$ and suppose that $\bigcup_{i=k}^{\theta_e(G)} C_i \setminus \{w_{k+1}, \ldots, w_{\theta_e(G)}\}$ contains a sink $w_j$ for some $j \in \{1,\ldots,k\}$.
Then $w_j \in C_i$ for some $i \in \{k, k+1, \ldots, \theta_e(G)\}$.
Since $w_i$ is the sink of $C_i$ and $w_j \in C_i$, there is an arc from $w_j$ to $w_i$ and so, by $(\star)$, $i<j$, which is a contradiction.
Therefore $\bigcup_{i=k}^{\theta_e(G)} C_i \setminus \{w_{k+1}, \ldots, w_{\theta_e(G)}\}$ does not contain a sink, that is, every vertex in $\bigcup_{i=k}^{\theta_e(G)} C_i \setminus \{w_{k+1}, \ldots, w_{\theta_e(G)}\}$ has indegree $0$ in $D$.
By the definition of $\widetilde{k}(G)$,
\[
\widetilde{k}(G) \ge \left| \bigcup_{i=k}^{\theta_e(G)} C_i \setminus \{w_{k+1}, \ldots, w_{\theta_e(G)}\} \right|  \ge \left| \bigcup_{i=k}^{\theta_e(G)} C_i \right| - \theta_e(G) + k.
\]
and the theorem follows.
\end{proof}

Opsut~\cite{opsut1982computation} showed that, for any graph $G$, $k(G) \ge \theta_e(G) - |V(G)| + 2$.
We generalize this inequality. To do so, we need the following theorem.

\begin{Thm}\label{thm:hall}
Let $G$ be a nonempty graph and $D$ be a digraph in $\widetilde{\DDD}(G)$.
Then $D$ has at least $\theta_e(G)$ vertices  of indegree nonzero.
\end{Thm}
\begin{proof}
Let $\CCC$ be a minimum edge clique cover of $G$ consisting of maximal cliques $C_1$, $\ldots$, $C_{\theta_e(G)}$.  We define
\[
A_i := \{ v \in V(D) \mid \text{ $v$ is a common out-neighbor of two vertices in $C_i$}\}
\]
for each $i=1,\ldots,\theta_e(G)$.
Since $\CCC$ is an edge clique cover of $G$, $G[C_i]$ contains at least one edge whose two end vertices, therefore, has a common out-neighbor in $D$ and so $A_i \neq \emptyset$ for each $i=1,\ldots,\theta_e(G)$.

Let $B = (X,Y)$ be a bipartite graph, where $X = \{A_1, A_2, \ldots, A_{\theta_e(G)}\}$ and $Y = V(D)$, such that, for $A_i \in X$ and $v \in Y$, $\{A_i, v\}$ is an edge of $B$ if and only if $v \in A_i$.
By definition, $A_{i} = N_B(A_{i})$ for each $i = 1,\ldots,\theta_e(G)$.
To show that $B$ satisfies Hall's condition for Hall's marriage theorem, suppose, to the contrary, that there exists $S \subset X$ such that $|S| > |N_B(S)|$.
We denote $S = \{A_{i_1}, A_{i_2}, \ldots, A_{i_k} \}$ and $N_B(S) = \{z_1,z_2,\ldots,z_l\}$. Then $k > l$ by our assumption.

To show that $N^-(z_1), N^-(z_2), \ldots, N^-(z_l)$ in $D$ cover all the edges covered by $C_{i_1}$, $C_{i_2}$, $\ldots$, $C_{i_k}$, take an edge $e$ of $G$ covered by $C_{i_j}$ for some $j \in \{1,\ldots,k\}$.
Then there exists a vertex $z \in A_{i_j}$ such that $z$ is a common out-neighbor of the end vertices of $e$.
Therefore $e$ is covered by $N^-(z)$.
Since $A_{i_j} \in X$, $A_{i_j} = N_B(A_{i_j})$.
Since $A_{i_j} \in S$, $N_B(A_{i_j}) \subset N_B(S)$.
Therefore $z \in N_B(S)$ since $z \in A_{i_j}$ and $A_{i_j} = N_B(A_{i_j})$.
As the in-neighborhood of each vertex of $N_B(S)$ clearly forms a clique in $G$, $\{N^-(z_1), N^-(z_2), \ldots, N^-(z_l) \}$ is a family of cliques of $G$ covering all the edges covered by $\{C_{i_1}, C_{i_2}, \ldots, C_{i_k} \}$.
Since $k > l$, we replace $C_{i_1}, C_{i_2}, \ldots, C_{i_k}$ with $N^-(z_1), N^-(z_2), \ldots, N^-(z_l)$ in $\CCC$ to obtain a new edge clique cover of $G$ consisting of fewer cliques than $\CCC$, a contradiction. Thus $B$ satisfies Hall's condition and so, by Hall's marriage theorem, $B$ has a matching $M = \{ \{A_i, w_i\} \mid i = 1,\ldots,\theta_e(G) \}$ that saturates $X$.
By the definition of $A_i$, $w_1, w_2, \ldots, w_{\theta_e(G)}$ are vertices of indegree at least two.
Since each of $w_1, w_2, \ldots, w_{\theta_e(G)}$ is saturated by the matching $M$, $w_1, w_2, \ldots, w_{\theta_e(G)}$ are all distinct and the theorem statement follows.
\end{proof}

\begin{Thm}\label{prop:cn}
For any graph $G$, $k(G) \ge \theta_e(G) - |V(G)| + \widetilde{k}(G)$.
\end{Thm}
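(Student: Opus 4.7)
The plan is to deduce this inequality directly from Theorem~\ref{thm:hall} via a one-line vertex count in a digraph that realizes $p(G)$. By definition of $p(G)$, one can choose an acyclic digraph $D$ whose competition graph is $G$ together with $k(G)$ isolated vertices and in which exactly $p(G)$ vertices have in-degree $0$. Since $|V(D)| = |V(G)| + k(G)$, the number of vertices of $D$ having in-degree at least one is precisely
\[
|V(G)| + k(G) - p(G).
\]

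First I would dispose of the trivial case in which $G$ has no edges: then $\theta_e(G)=0$, $k(G)=0$, and every vertex of the empty digraph on $V(G)$ is of in-degree $0$, so $p(G)=|V(G)|$, and the claimed inequality reduces to $0 \ge 0$. Thus I may assume $G$ has at least one edge. In that case Theorem~\ref{thm:hall} applies to $D$ and gives that $D$ has at least $\theta_e(G)$ vertices of in-degree nonzero, so
\[
|V(G)| + k(G) - p(G) \;\ge\; \theta_e(G),
\]
which rearranges to the desired bound $k(G) \ge \theta_e(G) - |V(G)| + p(G)$.

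The real content of the inequality therefore sits in Theorem~\ref{thm:hall}, which has already been established via the Hall-type argument on the bipartite graph $B=(X,Y)$ pairing the cliques $C_i$ with their realizing common out-neighbors; the present theorem itself requires only this bookkeeping. The only place where one might expect difficulty is in justifying that $p(G)$ is well-defined and finite, but this is already noted after the definition of $p(G)$ (since the number of in-degree-$0$ vertices in any such $D$ is bounded by $|V(D)|$, and we optimize over finitely many digraphs up to isomorphism on $|V(G)|+k(G)$ vertices), so no additional work is needed. Hence the proof should fit in just a few lines once Theorem~\ref{thm:hall} is invoked.
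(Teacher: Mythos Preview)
Your proposal is correct and essentially identical to the paper's own proof: both handle the edgeless case separately, then pick an acyclic digraph $D$ attaining $p(G)$, note $|V(D)|=|V(G)|+k(G)$, invoke Theorem~\ref{thm:hall} to get at least $\theta_e(G)$ vertices of nonzero in-degree, and compare with the $p(G)$ vertices of in-degree~$0$.
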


\begin{proof}
If $G$ is an empty graph, then obviously $k(G)=0$, $\theta_e(G)=0$, $\widetilde{k}(G)=|V(G)|$, and the inequality holds.
Let $G$ be a nonempty graph and $D$ be a digraph by which $\widetilde{k}(G)$ is attained. By definition,
\begin{equation}\label{eqn:cn1}
|V(D)|=|V(G)|+k(G).
\end{equation}
On the other hand, by Theorem~\ref{thm:hall}, there are at least $\theta_e(G)$ vertices of indegree nonzero in $D$.
Since any vertex in $D$ has indegree $0$ or indegree nonzero,
\begin{equation}\label{eqn:cn2}
|V(D)| \ge \widetilde{k}(G)+ \theta_e(G).
\end{equation}
Then (\ref{eqn:cn1}) and (\ref{eqn:cn2}) yield the desired inequality.
\end{proof}

\noindent Opsut's inequality for competition numbers immediately follows from Proposition~\ref{prop:base2} and Theorem~\ref{prop:cn}.

\begin{Prop}\label{prop:basenumber1}
Let $G$ be a graph with a competition-effective edge clique cover $\CCC$.
Then $\widetilde{k}(G)$ is attained by any digraph accompanying $\CCC$. Furthermore,
\begin{align*}
|V(D)| = \widetilde{k}(G)+ \theta_e(G)
\end{align*}
for any digraph $D$ accompanying $\CCC$.
\end{Prop}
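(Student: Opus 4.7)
The plan is to pit the exact structural count of in-degree-zero vertices in a digraph accompanying $\CCC$ against the lower bound $k(G) \ge \theta_e(G) - |V(G)| + p(G)$ from Theorem~\ref{prop:cn}, so that the two pincer each other into an equality.

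First I would set up the count. Let $D$ be any digraph accompanying $\CCC$. By Definition~\ref{def:cover}, the set of vertices of in-degree nonzero in $D$ is precisely $\{w_1, \ldots, w_{\theta_e(G)}\}$, and these sinks are pairwise distinct (as noted immediately after the definition, using the minimality of $\CCC$). Hence $D$ has exactly $\theta_e(G)$ vertices of in-degree nonzero, and since the competition graph of $D$ is $G$ together with $k(G)$ isolated vertices, $|V(D)| = |V(G)| + k(G)$. Consequently the number of vertices of in-degree $0$ in $D$ is exactly
\[
|V(D)| - \theta_e(G) = |V(G)| + k(G) - \theta_e(G).
\]

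Then I would squeeze this count from both sides. On one hand, $D$ itself is an acyclic digraph whose competition graph is $G$ together with $k(G)$ isolated vertices, so the definition of $p(G)$ as a maximum gives $|V(G)| + k(G) - \theta_e(G) \le p(G)$. On the other hand, Theorem~\ref{prop:cn} supplies the reverse inequality $|V(G)| + k(G) - \theta_e(G) \ge p(G)$. The two bounds collapse to equality, so $D$ has exactly $p(G)$ vertices of in-degree $0$, meaning $D$ attains $p(G)$, and $|V(D)| = |V(G)| + k(G) = p(G) + \theta_e(G)$.

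There is no real obstacle: the substantive work has already been done in Theorem~\ref{thm:hall} and Theorem~\ref{prop:cn}, and what the present proposition really asserts is that, whenever an effective competition cover exists, the lower bound of Theorem~\ref{prop:cn} is tight and is simultaneously realized by every digraph accompanying $\CCC$. The only point to be careful about is invoking the fact (built into Definition~\ref{def:cover}) that the listed sinks $w_1,\ldots,w_{\theta_e(G)}$ exhaust all vertices of in-degree nonzero in $D$, rather than merely sitting inside that set.
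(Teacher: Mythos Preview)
Your argument is correct and essentially matches the paper's proof: both observe that $D$ has exactly $\theta_e(G)$ vertices of nonzero in-degree and then squeeze the zero-in-degree count against $p(G)$. The only cosmetic difference is that you invoke Theorem~\ref{prop:cn} for the upper bound on $p(G)$, whereas the paper appeals directly to Theorem~\ref{thm:hall} (noting that since every competing digraph has at least $\theta_e(G)$ vertices of nonzero in-degree, $D$ automatically maximizes the in-degree-$0$ count); since Theorem~\ref{prop:cn} is itself a one-line consequence of Theorem~\ref{thm:hall}, the two routes coincide.
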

\begin{proof}
Let $D$ be a digraph accompanying $\CCC$.
By definition, $D$ has exactly $\theta_e(G)$ vertices of indegree nonzero.
Therefore, by Theorem~\ref{thm:hall}, $D$ has the smallest number of vertices of indegree nonzero. This implies that $D$ has the largest number of vertices of indegree $0$.
By the definition of $\widetilde{k}(G)$, $\widetilde{k}(G)$ is attained by $D$.
Now, since the number of vertices of indegree nonzero is $\theta_e(G)$ and the number of vertices of indegree $0$ is $\widetilde{k}(G)$ in $D$, we have $|V(D)| = \widetilde{k}(G)+ \theta_e(G)$.
\end{proof}

In the following, we present some sufficient conditions for a graph to make the inequality given in  Theorem~\ref{prop:cn} sharp.

\begin{Prop}\label{prop:cne}
Let $G$ be a graph with a competition-effective edge clique cover  $\CCC$. Then $k(G) = \theta_e(G) - |V(G)| + \widetilde{k}(G)$.
\end{Prop}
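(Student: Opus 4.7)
The plan is to combine Theorem~\ref{prop:cn} (which already supplies the inequality $k(G) \ge \theta_e(G) - |V(G)| + p(G)$) with Proposition~\ref{prop:basenumber1} to pin down the reverse inequality for graphs with an effective competition cover. In other words, the only thing left to do is produce a single acyclic digraph whose competition graph is $G$ together with $k(G)$ isolated vertices and which has exactly $p(G) + \theta_e(G)$ vertices. Since such a digraph is guaranteed by the hypothesis, there is no real obstacle — the work has already been done in the previous results.

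Concretely, I would fix an arbitrary digraph $D$ accompanying the effective competition cover $\CCC$. By the definition of an accompanying digraph (Definition~\ref{def:cover}), the competition graph of $D$ is $G$ together with $k(G)$ isolated vertices, hence
\[
|V(D)| = |V(G)| + k(G).
\]
On the other hand, Proposition~\ref{prop:basenumber1} tells us that $p(G)$ is attained by $D$ and that
\[
|V(D)| = p(G) + \theta_e(G).
\]
Equating these two expressions for $|V(D)|$ yields $k(G) = \theta_e(G) - |V(G)| + p(G)$, which is the desired equality. Since Theorem~\ref{prop:cn} already gives the ``$\ge$'' direction, this double-counting argument actually supplies both directions at once, so no separate bookkeeping is needed.

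The main (and only) conceptual point is therefore the identification of $|V(D)|$ via two different counts: once via the vertex count of $G$ plus the isolated vertices forced by the competition number, and once via the split of $V(D)$ into in-degree-zero vertices (counted by $p(G)$ in an optimal $D$) and in-degree-nonzero vertices (counted by $\theta_e(G)$ in any accompanying $D$, per Theorem~\ref{thm:hall}). There is no case analysis or combinatorial obstruction to overcome; the proof is essentially a two-line deduction from the machinery built in the preceding section.
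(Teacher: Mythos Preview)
Your proof is correct and matches the paper's own argument essentially line for line: take a digraph $D$ accompanying $\CCC$, compute $|V(D)|$ once as $|V(G)|+k(G)$ from Definition~\ref{def:cover} and once as $p(G)+\theta_e(G)$ from Proposition~\ref{prop:basenumber1}, and equate. Your additional remark that Theorem~\ref{prop:cn} is not strictly needed here (since the double count already gives equality) is accurate.
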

\begin{proof}
Let $D$ be a digraph accompanying $\CCC$.
By Proposition~\ref{prop:basenumber1},
\begin{align*}
|V(D)| = \widetilde{k}(G)+ \theta_e(G).
\end{align*}
By the definition of competition number,
\begin{align*}
|V(D)| = |V(G)|+k(G).
\end{align*}
Then the above equalities yield the desired equality.
\end{proof}

\begin{figure}
\begin{center}
\includegraphics[width=4cm]{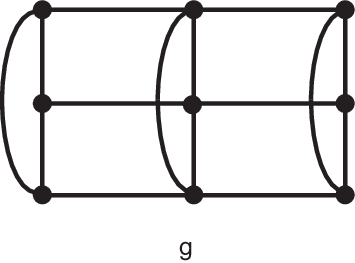}
\end{center}
\vskip-0.5cm
\caption{A graph $G$ with $\widetilde{k}(G) = 3$}
\label{fig:basenumber}
\end{figure}

Now we present an example showing how Theorem~\ref{prop:basenumber2} and Proposition~\ref{prop:cne} can be applied to obtain a lower bound for the competition number of a graph having a competition-effective edge clique cover.
The graph $G$ given in Figure~\ref{fig:basenumber} is a nonempty diamond-free graph, so it has a competition-effective edge clique cover by Corollary~\ref{cor:diamond2}.
Note that $\theta_e(G)=9$ and the union of any three maximal cliques of $G$ contains at least five vertices.
By applying Theorem~\ref{prop:basenumber2} for $k=7$, we have $\widetilde{k}(G) \ge 3$.
Thus, by Proposition~\ref{prop:cne}, $k(G) = \theta_e(G) - |V(G)| + \widetilde{k}(G) \ge 3$.
This bound is sharper than Opsut's bound in~\cite{opsut1982computation} that $k(G) \ge \theta_e(G) - |V(G)| + 2 = 2$.
In fact, we can show $k(G) \le 3$ by constructing an acyclic digraph whose competition graph is $G$ together with three isolated vertices, and therefore $k(G) = 3$.
Since $k(G) = \theta_e(G) - |V(G)| + \widetilde{k}(G)$, we have $\widetilde{k}(G)=3$.

The following proposition guarantees that the equality $k(G) = \theta_e(G) - |V(G)| + \widetilde{k}(G)$ given in Proposition~\ref{prop:cne} is still true under the following condition.

\begin{Prop}\label{prop:formula}
Let $G$ be a nonempty graph and $\CCC$ be a set of maximal cliques of $G$.
Suppose that every clique in $\CCC$ has an edge that is occupied by it.
Then $k(G) = \theta_e(G) - |V(G)| + \widetilde{k}(G)$.
\end{Prop}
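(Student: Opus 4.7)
The plan is to establish $k(G)\le\theta_e(G)-|V(G)|+p(G)$, which combined with Theorem~\ref{prop:cn} yields the desired equality. First, I would observe that the hypothesis forces $\CCC$ to be a minimum edge clique cover of $G$, by the same argument used in the proof of Theorem~\ref{thm:diamond}: if $\DDD$ is a minimum edge clique cover consisting of maximal cliques, then $\DDD\subseteq\CCC$, and omitting any $C\in\CCC$ would leave its occupied edge uncovered. Therefore $|\CCC|=\theta_e(G)$.

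My strategy is to construct an acyclic digraph $D^*$ whose vertex set contains $V(G)$, has $|V(D^*)|\le\theta_e(G)+p(G)$, and whose competition graph is $G$ together with isolated vertices; this will yield $k(G)\le\theta_e(G)+p(G)-|V(G)|$. Starting from an acyclic digraph $D_0$ that attains $p(G)$ with acyclic labeling $\ell$, for each $C\in\CCC$ I pick an occupied edge $e_C=\{a_C,b_C\}$ and let $x_C$ denote a common out-neighbor of $a_C,b_C$ in $D_0$ with the smallest $\ell$-value. The Hall's-marriage argument from Theorem~\ref{thm:hall} allows the $x_C$'s to be chosen pairwise distinct, and the occupancy of $e_C$ by $C$ forces $N^-_{D_0}(x_C)\subseteq C$. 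I then mimic the proof of Theorem~\ref{thm:diamond} by forming $D^*$ on $V(D_0)$ with arc set $\bigcup_{C\in\CCC}\{(v,x_C):v\in C\}$. Once acyclicity is verified, the competition graph of $D^*$ equals $G$ together with the $k(G)$ vertices of $V(D_0)\setminus V(G)$ as isolated vertices, $D^*$ has exactly $\theta_e(G)$ vertices of in-degree nonzero (the $x_C$'s), and the number of in-degree-$0$ vertices of $D^*$ is at most $p(G)$ by definition of the primary predator index, giving $|V(D^*)|\le\theta_e(G)+p(G)$ as desired.

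The main obstacle is the acyclicity of $D^*$. Under Theorem~\ref{thm:diamond}'s stronger hypothesis, the smallest-label choice of $x_C$ combined with every $v\in C$ being an endpoint of an occupied edge guarantees $\ell(x_C)<\ell(v)$ for every $v\in C$. Under the weaker hypothesis of the current proposition, a vertex $v\in C$ incident to no occupied edge of $C$ may satisfy $\ell(v)<\ell(x_C)$, and the arc $(v,x_C)$ then violates the labeling. The plan to overcome this relies on the following observation: if $v\in C$ is not an endpoint of any occupied edge of $C$, then every edge $vu$ with $u\in C\setminus\{v\}$ is also covered by some other maximal clique, so its coverage can be delegated to the sinks of those cliques. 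Accordingly, for each problematic $C$ I would replace $x_C$ by the smallest-label vertex $v^*$ of $C$ itself, with arcs redirected from $C\setminus\{v^*\}$ to $v^*$; this automatically respects $\ell$, and the edges $\{v^*u:u\in C\setminus\{v^*\}\}$ that are no longer covered by the sink of $C$ will be covered by the sinks of the other maximal cliques containing them. The most delicate step is maintaining pairwise distinctness of the sinks once $V(G)$-vertices may be used as sinks, which I would handle by a careful dichotomy (and, where collisions are unavoidable, by introducing a few fresh sink vertices with very small labels) to ensure both acyclicity and distinctness simultaneously.
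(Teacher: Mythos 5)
You correctly identify the crux --- under the weaker hypothesis that each maximal clique merely \emph{has} an occupied edge, the construction of Theorem~\ref{thm:diamond} loses acyclicity --- but your proposed repair does not close the gap. If you redirect all arcs of a problematic clique $C$ to its smallest-label vertex $v^*$, the edges $v^*u$ with $u\in C\setminus\{v^*\}$ are no longer covered by the sink of $C$, and nothing in the hypothesis prevents $v^*$ from being an endpoint of an edge \emph{occupied} by $C$; such an edge lies in no other maximal clique and is then lost from the competition graph. Your delegation observation applies only to vertices incident to no occupied edge of $C$, and even restricting $v^*$ to such vertices does not suffice in general: a clique $C'$ that is supposed to pick up an edge $v^*u$ may itself be problematic with $v^*$ or $u$ chosen as its internal sink, so the delegation can cascade and fail. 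Finally, the fallback of ``introducing a few fresh sink vertices'' is fatal to the counting: the competition graph of $D^*$ would then be $G$ together with \emph{more} than $k(G)$ isolated vertices, so the definition of $p(G)$ no longer bounds the number of in-degree-$0$ vertices of $D^*$, and the inequality $|V(D^*)|\le\theta_e(G)+p(G)$ collapses.

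The paper avoids any such construction. It takes an acyclic digraph $D$ attaining $p(G)$ with the most arcs, notes that every vertex of nonzero in-degree must have in-degree at least two (otherwise deleting its single incoming arc would yield a digraph with the same competition graph and more than $p(G)$ vertices of in-degree $0$), and assigns to each such vertex $w$ a maximal clique containing $N^-_D(w)$. Since the hypothesis forces the set of all maximal cliques to have exactly $\theta_e(G)$ members, having more than $\theta_e(G)$ vertices of nonzero in-degree would, by pigeonhole, produce two sinks $w_p,w_q$ assigned to the same maximal clique; rerouting the arcs entering the later-labeled one onto the earlier-labeled one preserves acyclicity and the competition graph while freeing an extra in-degree-$0$ vertex, a contradiction. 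Hence $D$ has exactly $\theta_e(G)$ vertices of nonzero in-degree and $p(G)$ of in-degree $0$, so $|V(G)|+k(G)=|V(D)|=\theta_e(G)+p(G)$. If you wish to salvage your constructive strategy, this ``merge two sinks lying in a common maximal clique'' move is the mechanism you are missing.
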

\begin{proof}
Since $\CCC$ is a set of maximal cliques of $G$, $\CCC$ is an edge clique cover of $G$.
Moreover, by the hypothesis that every clique in $\CCC$ has an edge that is occupied by it, it is a minimum edge clique cover.

Let $D$ be a digraph with the most arcs among which acyclic digraphs  by which $\widetilde{k}(G)$ are attained.
We show that the number $r$ of vertices with at least one in-neighbor in $D$ equals $\theta_e(G)$.
By Theorem~\ref{thm:hall}, $r \ge \theta_e(G)$.
To reach a contradiction, we assume $r > \theta_e(G)$.
Let $w_1, \ldots, w_r$ be the vertices with at least one in-neighbor in $D$.
Since the co-competition number of $G$ is attained by $D$, $w_i$ has at least two in-neighbors in $D$ and so $N^-_D(w_i)$ forms a clique of size at least two for each $i=1,\ldots,r$.
Let $C_i$ be a maximal clique including $N^-_D(w_i)$ for each $i=1,\ldots,r$.
Then $C_i$ belongs to $\CCC$.
Since $|\CCC|=\theta_e(G)$ and $r > \theta_e(G)$, $C_p = C_q$ for some distinct $p,q \in \{1,\ldots,r\}$ by the Pigeonhole principle.
Without loss of generality, we may assume $w_p$ has a lower label than $w_q$ in an acyclic labeling of $D$.
Now we detour the arcs from $N^-_D(w_q)$ to $w_q$ so that their heads change from $w_q$ to $w_p$.
In this way, we obtain a new acyclic digraph $D^*$.
Since $N^-_D(w_p)$ and $N^-_D(w_q)$ are included in the same clique in $G$, $C(D^*) = C(D)$.
However, $w_q$ is a vertex of indegree $0$ in $D^*$, so the number of vertices of indegree $0$ in $D^*$ is greater than that of vertices of indegree $0$ in $D$, which contradicts the choice of $D$. Therefore $r=\theta_e(G)$.
\end{proof}



\section{Competition numbers of planar graphs}

In this section, we compute competition numbers of some planar graphs in terms of co-competition numbers.

A plane embedding of a planar graph does not change the parameters dealt with in this paper. In this context, we mean by a planar graph a plane embedding of it.
For example, the number of faces in a planar graph means the number of faces in one of its plane embeddings.

We denote the number of faces in a planar graph $G$ by $f(G)$.
\begin{Prop}\label{prop:planar1}
Suppose that $G$ is a nontrivial connected planar graph.
Then $$k(G) \le f(G)$$ and the equality holds if $G$ is triangle-free.
\end{Prop}
\begin{proof}
By the Euler formula $|V(G)| - |E(G)| + f(G) = 2$ for a connected planar graph $G$, it is sufficient to show that
\begin{equation}\label{eqn:planar1}
k(G) \le |E(G)| - |V(G)| + 2
\end{equation}
for every connected planar graph $G$ in order to prove the inequality.
We show (\ref{eqn:planar1}) by induction on the number of edges.
If $G$ has only one edge, then $|E(G)|=1$, $|V(G)|=2$, $ k(G)=1$ and so (\ref{eqn:planar1}) holds.
Suppose that (\ref{eqn:planar1}) holds for all connected planar graphs with $m$ edges.
Let $G$ be a connected planar graph with $m+1$ edges.
Suppose that $G$ is a tree. Then $|V(G)| = |E(G)|+1$.
Since the competition number of a tree is known to be at most one, $k(G) \le 1$, and so  (\ref{eqn:planar1}) holds.
Suppose that $G$ is not a tree.
Then $G$ has an edge $e$ such that $G-e$ is connected.
Since $G$ is planar, $G-e$ is also planar.
Thus, by the induction hypothesis,
\begin{equation}\label{eqn:planar2}
k(G-e) \le |E(G-e)| - |V(G-e)| + 2.
\end{equation}
Clearly
\begin{equation}\label{eqn:addplanar1}
|E(G-e)|=|E(G)|-1 \quad \text{ and }  \quad |V(G-e)|=|V(G)|.
\end{equation}
In addition,
\begin{equation}\label{eqn:addplanar2}
k(G-e) + 1 \ge k(G).
\end{equation}
For, we may add one additional vertex and the arcs from the ends of $e$ to that vertex to an acyclic digraph whose competition graph is $G-e$ together with $k(G-e)$ isolated vertices to obtain an acyclic digraph whose competition graph is $G$ together with $k(G-e) + 1$ isolated vertices.
By \eqref{eqn:planar2}, \eqref{eqn:addplanar1}, and \eqref{eqn:addplanar2}
\[
k(G) \le k(G-e)+1 \le (|E(G)|-1) - |V(G)| + 2 + 1,
\]
which is simplified to $k(G) \le |E(G)| - |V(G)| + 2$.
Therefore (\ref{eqn:planar1}) holds for every connected planar graph $G$.

It is well-known that if $G$ is triangle-free, then  $k(G)=|E(G)|-|V(G)|+2$.
Therefore, if a connected planar graph $G$ is triangle-free, then $k(G) = f(G)$ since $f(G) = |E(G)|-|V(G)|+2$.
\end{proof}

\begin{Cor}\label{cor:planar}
If $G$ is a planar graph, $k(G) \le f(G)$.
\end{Cor}
\begin{proof}
Let $G$ be a planar graph.
If $G$ is an empty graph, then $k(G)=0$ and $f(G)=1$, and so the inequality immediately holds.
Thus we may assume that $G$ has at least one non-isolated vertex.
Let $G^*$ be the subgraph of $G$ induced by the non-isolated vertices.
Let $G_1, \ldots, G_r$ be the components of $G^*$.
Since $G^*$ has no isolated vertex, $k(G_i) \ge 1$ for any $i=1,\ldots,r$.
Then $f(G^*) = \sum_{i=1}^r f(G_i) -r+1$ since the outer face is counted once whenever $f(G_i)$ is computed for $i=1,\ldots,r$.
By Proposition~\ref{prop:planar1}, $k(G_i) \le f(G_i)$ for each $i=1,\ldots,r$ and so $\sum_{i=1}^r k(G_i) -r+1 \le f(G^*)$.

Let $D_i$ be an acyclic digraph whose competition graph is $G_i$ together with $k(G_i)$ isolated vertices for each $i=1,\ldots,r$.
Fix $i \in \{1,\ldots,r\}$.
Since $D_i$ is acyclic, it has a vertex of indegree $0$.
We take an isolated vertex added to obtain $C(D_i)$ and a vertex of indegree $0$ in $D_i$ and denote them by $a_i$ and $u_i$, respectively.
Now we patch $D_1, \ldots, D_r$ by merging (or identifying) $a_{i+1}$ and $u_{i}$ for $i=1,\ldots,r-1$ to obtain a digraph $D$.
By construction, it is obvious that $D$ is acyclic and its competition graph is $G^*$ together with $\sum_{i=1}^r k(G_i) -r+1$ isolated vertices.
Therefore $k(G^*) \le \sum_{i=1}^r k(G_i) -r+1$ and so $k(G^*) \le f(G^*)$.
Since $f(G) = f(G^*)$ and $k(G) \le k(G^*)$, we have $k(G) \le f(G)$.
\end{proof}

A \emph{hole} of a graph is an induced cycle of length at least four.
In 2005, Kim~\cite{kim2005graphs} conjectured that every graph $G$ with $h(G)$ holes satisfies $k(G) \le h(G)+1$, which was proven by Mckay {\it et al.}~\cite{mckay2014competition} in 2014.
By the way, Corollary~\ref{cor:planar} sometimes gives a better bound for planar graphs as one can see from the example given in Figure~\ref{fig:hole}.

\begin{figure}
\begin{center}
\includegraphics[width=5cm]{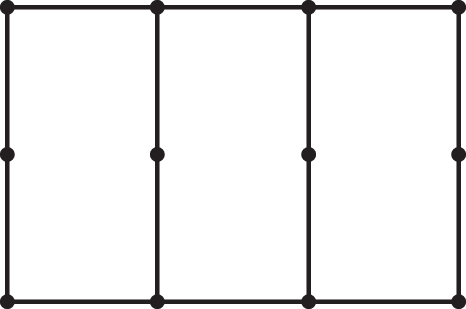}
\end{center}
 \caption{A planar graph $G$ with $f(G)=4$ and $h(G)=6$}
 \label{fig:hole}
\end{figure}

Let $G$ be a planar graph.
Kuratowski's theorem tells us that $G$ contains no $K_5$ as a subgraph, so any maximal clique in $G$ consists of at most four vertices.
For $i=2,3,4$, we denote by $c_i(G)$ the number of maximal cliques of size $i$ in $G$.

We have shown that $k(G) = \theta_e(G) - |V(G)| + \widetilde{k}(G)$ in Proposition~\ref{prop:formula} for a graph $G$ having a competition-effective edge clique cover.
Since a nonempty diamond-free graph has a competition-effective edge clique cover, it satisfies the equality.
Furthermore, if a nonempty graph $G$ is diamond-free and planar, then $\theta_e(G)$ can be represented in terms of $|E(G)|$, $c_3(G)$, and $c_4(G)$ as shown in the following proposition.

\begin{Prop}\label{prop:planar}
Let $G$ be a connected diamond-free planar graph.
Then
\begin{equation}\label{eqn:formula}
k(G) = f(G) + \widetilde{k}(G) - 2c_3(G) - 5c_4(G) - 2.
\end{equation}
\end{Prop}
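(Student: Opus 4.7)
The plan is to combine Proposition~\ref{prop:cne} with Euler's formula for connected plane graphs. First, I would dispose of the degenerate case where $G$ has no edges: then connectedness forces $G$ to be a single vertex, and both sides of (\ref{eqn:formula}) vanish. So I may assume $G$ has at least one edge. By Corollary~\ref{cor:diamond2}, the diamond-free graph $G$ has an effective competition cover, so Proposition~\ref{prop:cne} gives $k(G) = \theta_e(G) - |V(G)| + p(G)$. This reduces the claim to the edge-clique-cover identity
\[
\theta_e(G) = |V(G)| + f(G) - 2c_3(G) - 5c_4(G) - 2.
\]

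Next, I would compute $\theta_e(G)$ combinatorially from the local structure. Because $G$ is planar, Kuratowski's theorem excludes $K_5$ as a subgraph, and hence every maximal clique of $G$ has $2$, $3$, or $4$ vertices. Because $G$ is diamond-free, no two maximal cliques share an edge, so each maximal clique is the unique one covering each of its edges. Consequently the collection of all maximal cliques is forced to appear in every edge clique cover, which makes it a minimum one. This yields $\theta_e(G) = c_2(G) + c_3(G) + c_4(G)$.

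The last ingredient is a double count of edges. Since every edge of $G$ lies in exactly one maximal clique, summing $\binom{i}{2}$ over each maximal clique of size $i$ gives
\[
|E(G)| = c_2(G) + 3c_3(G) + 6c_4(G).
\]
Combining with Euler's formula $|V(G)| - |E(G)| + f(G) = 2$ expresses $|E(G)|$ in terms of $|V(G)|$ and $f(G)$, and substituting into the expression for $\theta_e(G)$ obtained in the previous paragraph delivers the desired identity after a short cancellation. I do not anticipate any serious obstacle: the substantive content is already packaged in Proposition~\ref{prop:cne}, and what remains is a piece of bookkeeping reconciling the maximal-clique count with Euler's formula via the diamond-free and $K_5$-free hypotheses.
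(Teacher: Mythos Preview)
Your proposal is correct and follows essentially the same route as the paper: dispose of the one-vertex case, invoke Corollary~\ref{cor:diamond2} and Proposition~\ref{prop:cne} to get $k(G)=\theta_e(G)-|V(G)|+p(G)$, use diamond-freeness to identify $\theta_e(G)=c_2(G)+c_3(G)+c_4(G)$ and $|E(G)|=c_2(G)+3c_3(G)+6c_4(G)$, and finish with Euler's formula. The only minor wording issue is that the maximal cliques are not literally ``forced to appear in every edge clique cover'' (subcliques could be used instead), but your intended counting argument---that covering the occupied edges of distinct maximal cliques requires distinct cliques---is exactly the paper's and yields $\theta_e(G)=c_2(G)+c_3(G)+c_4(G)$ as claimed.
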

\begin{proof}
If $G$ has only one vertex, then $k(G)=0$, $f(G)=1$, $\widetilde{k}(G)=1$, $c_3(G)=c_4(G)=0$ and so (\ref{eqn:formula}) holds.

Suppose that $G$ has at least two vertices.
Since $G$ is connected, $G$ has at least one edge.
Let $\CCC$ be a minimum edge clique cover of $G$ consisting of maximal cliques.
Since $G$ is diamond-free, each edge of $G$ belongs to exactly one maximal clique.
Thus $\CCC$ consists of all the maximal cliques of $G$.
On the other hand, since $G$ is connected and planar, any maximal clique of $G$ has size $2$ or $3$ or $4$.
Thus
\begin{equation}\label{eqn:h0}
\theta_e(G) = c_2(G) + c_3(G) + c_4(G).
\end{equation}
Since $G$ is diamond-free, the maximal cliques of $G$ are mutually edge-disjoint.
Therefore $|E(G)|={2 \choose 2}c_2(G) + {3 \choose 2}c_3(G) + {4 \choose 2}c_4(G)$ and so $c_2(G) = |E(G)| - 3c_3(G) - 6c_4(G)$.
By substituting this into (\ref{eqn:h0}), we have
\begin{equation}\label{eqn:h1}
\theta_e(G) = |E(G)| - 2c_3(G) - 5c_4(G).
\end{equation}
Since $G$ is a nonempty diamond-free graph, it has a competition-effective edge clique cover by Corollary~\ref{cor:diamond2} and so
$k(G) = \theta_e(G) - |V(G)| + \widetilde{k}(G)$
by Proposition~\ref{prop:cne}.
This equality together with (\ref{eqn:h1}) gives $k(G) = |E(G)| -|V(G)| - 2c_3(G) - 5c_4(G) + \widetilde{k}(G)$.
Since $G$ is connected and planar, by Euler's formula, (\ref{eqn:formula}) follows.
\end{proof}
\noindent
Now we have a result giving bounds for competition numbers of connected diamond-free planar graphs in terms of the number of faces, the number of maximal cliques of size $3$, and the number of maximal cliques of size $4$, which are easily computed.

\begin{Thm}
Let $G$ be a nontrivial connected diamond-free planar graph. Then
\[
\max \{ f(G) - 2c_3(G)-5c_4(G), 1 \} \le k(G) \le f(G) - c_3(G) - 3c_4(G).
\]
\end{Thm}
\begin{proof}
Since $G$ is diamond-free, the maximal cliques of $G$ are mutually edge-disjoint.
Therefore, in $G$, we may delete an edge in each maximal clique of size $3$ and three edges in each maximal clique of size $4$ so that any maximal clique of size $3$ becomes an induced path of length $2$ and any maximal clique of size $4$ becomes an induced path of length $3$.
In this way, we deleted exactly $c_3(G)+3c_4(G)$ edges to have the resulting graph, say $G'$, connected.
Since $G$ is planar, $G'$ is planr.
Then $f(G') = f(G) - c_3(G)-3c_4(G)$.
Since $G'$ has at least one edge, $k(G') \le f(G')$ by Proposition~\ref{prop:planar1}.

Let $D'$ be an acyclic digraph whose competition graph is $G'$ together with $k(G')$ isolated vertices.
Let $\ell$ be an acyclic labeling of $D'$.
Take a maximal clique $K$ of size $3$ or $4$ in $G$.
By the choice of $D'$, each pair of vertices in $K$ that are adjacent in $G'$ has a common out-neighbor in $D'$.
Now we take such common out-neighbors.
Among these common out-neighbors, let $z$ be the vertex with the smallest $\ell$-value.
We add to $D'$ the arcs from the vertices in $K$ to $z$ without creating multiple arcs.
We continue to add arcs in this way for the remaining maximal cliques of size $3$ or $4$ in $G$.
We denote the resulting digraph by $D$.
Since each of the arcs added goes from a vertex with a higher $\ell$-value to a vertex with a lower $\ell$-value, $\ell$ is an acyclic labeling of $D$ and so $D$ is acyclic.
Clearly, $E(G)\subset E(C(D))$.
Since the way of deleting the edges to obtain $G'$ does not leave any triangle, every maximal clique in $G'$ is of size $2$ and therefore the vertex chosen in $D'$ for each maximal clique because it has the smallest $\ell$-value has exactly two in-neighbors in $D'$.
Thus $E(C(D))\subset E(G)$.
Hence the competition graph of $D$ is $G$ together with $k(G')$ isolated vertices.
Therefore $k(G) \le k(G')$ and the upper bound is obtained.

To obtain the lower bound, we recall  Proposition~\ref{prop:base2} and Proposition~\ref{prop:planar}, which give the inequality $k(G) \ge f(G) - 2c_3(G)-5c_4(G)$.
Since $G$ is a nonempty connected graph, $k(G) \ge 1$ and so we obtain the desired lower bound.
\end{proof}

\section{Concluding remarks}
We have a strong belief that every nonempty graph has a competition-effective edge clique cover.
If this conjecture turns out to be true, then every graph $G$ satisfies the equality $k(G) = \theta_e(G) - |V(G)| + \widetilde{k}(G)$ by Proposition~\ref{prop:cne}.

%
%

\section{Acknowledgement}

The research of the second author and the third author was supported by the National Research Foundation of Korea (NRF) funded by the Korea government (MEST) (No.\ NRF-2015R1A2A2A01006885, No.\ NRF-2017R1E1A1A03070489) and by the Korea government (MSIP).
The first and second author's research was supported by Basic Science Research Program through the National Research Foundation of Korea(NRF) funded by the Ministry of Education(NRF-2018R1D1A1B07049150).


\end{document}